\colorlet{foldline}{Red}
\colorlet{vector}{RoyalBlue!100!black}
\colorlet{halfgray}{darkgray}
\theoremstyle{plain}
\newtheorem{thm}{Theorem}
\newtheorem{lem}[thm]{Lemma}
\theoremstyle{definition}
\newtheorem{exmp}{Example}
\title{Division of an angle into equal parts and construction of regular polygons by multi-fold origami}
\author{Jorge C. Lucero\thanks{Dept.\ Computer Science, University of Bras\'{i}lia, Brazil. E-mail: lucero@unb.br}} 
\date{\today} 
\begin{document}

\maketitle
\thispagestyle{empty}

\begin{abstract} 
\noindent 
This article analyses geometric constructions by origami when up to $n$ simultaneous folds may be done at each step. It shows that any arbitrary angle can be $m$-sected if the largest prime factor of $m$ is $p\le n+2$. Also, the regular $m$-gon can be constructed if the largest prime factor of $\phi(m)$ is $q\le n+2$, where $\phi$ is Euler's totient function. 
  
\end{abstract}

\section{Introduction}

Two classic construction problems of plane geometry are the division of an arbitrary angle into equal parts and the construction of regular polygons \citep{Martin1998}. 
It is well known that the use of straight edge and compass allows for the bisection of angles and the constructions of regular $m$-gons if $m=2^ap_1p_2\cdots p_k$, where $a, k\ge 0$ and each $p_i$ is a distinct odd prime of the form $p_i=2^b_i+1$. It is also known that origami extends the constructions by allowing for the trisection of angles and the constructions of regular $m$-gons if $m=2^{a_1}3^{a_2} p_1p_2\cdots p_k$, where $a_1, a_2, k\ge 0$ and each $p_i$ is a distinct prime of the form $p_i=2^{b_{i,1}}3^{b_{i,2}}+1>3$ \citep{Alperin2000}. 

Standard origami constructions are performed by a sequence of elementary single-fold operations, one at a time. Each elementary operation solves a set of specific incidences constraints between given points and lines and their folded images \citep{Alperin2000, Alperin2006,Justin1986}. A total of eight elementary operations may be defined and stated as in Table \ref{axioms} \citep{Lucero2017}. 
The operations can solve arbitrary cubic equations \citep{Geretschlager1995, Hull2011}, and therefore they can be applied to related construction problems such as the duplication of the cube \citep{Messer1986} and those mentioned above \citep{Geretschlager1995,Geretschlager1997,Geretschlager1997b}.

\begin{table}
\centering
\caption{Single-fold operations \citep{Lucero2017}. $\mathcal{O}$ denotes the medium in which folds are performed; e.g., a sheet of paper, fabric, plastic, metal or any other foldable material.}
\label{axioms}

\begin{tabularx}{\textwidth}{cX}
\hline
\# &\multicolumn{1}{c}{Operation}\\
\hline
1 & Given two distinct points $P$ and $Q$, fold $\mathcal{O}$ to place $P$ onto $Q$.\\
2 &  Given two distinct lines $r$ and $s$, fold $\mathcal{O}$ to align $r$ and $s$.\\
3 & Fold along a given a line $r$.\\
4 & Given two distinct points $P$ and $Q$, fold $\mathcal{O}$ along a line passing through $P$ and $Q$.\\
5 & Given a line $r$ and a point $P$, fold $\mathcal{O}$ along a line passing through $P$ to reflect $r$ onto itself.\\
6 & Given a line $r$, a point $P$ not on $r$ and a point $Q$, fold $\mathcal{O}$ along a line passing through $Q$ to place $P$ onto $r$.\\
7 & Given two lines $r$ and $s$, a point $P$ not on $r$ and a point $Q$ not on $s$, where $r$ and $s$ are distinct or $P$ and $Q$ are distinct, fold $\mathcal{O}$ to place $P$ onto $r$, and $Q$ onto $s$.\\
8 & Given two lines $r$ and $s$, and a point $P$ not on $r$, fold $\mathcal{O}$ to place $P$ onto $r$, and to reflect $s$ onto itself.\\
\hline
\end{tabularx}
\end{table}

The range of origami constructions may be extended further by using multi-fold operations, in which up to $n$ simultaneous folds may be performed at each step \citep{Alperin2006}, instead of single folds. In the case of $n=2$, the set of possible elementary operations increases to  209 or more (the exact number has still not been determined). It has been shown that 2-fold origami allows for the geometric solution of arbitrary septic equations \citep{Konig2016}, quintisection of an angle \citep{Lang2004a} and construction of the regular hendecagon \citep{Lucero2018}.

Thus, the purpose of this article is to analyze the general case of $n$-fold origami with arbitrary $n\ge1$ and determine what angle divisions and regular polygons can be obtained.

\section{Single- and multi-fold origami}

An $n$-fold elementary operation is the resolution of a minimal set of incidence constraints between given points, lines, and their folded images, that defines a finite number of sets of $n$ fold lines \citep{Alperin2006}. 
For the case of $n=1$, all possible elementary operations are those listed in Table \ref{axioms}. 
An example of operation for $n=2$ is illustrated in Fig.~\ref{nishi}. 

\begin{figure}
\centering
\begin{pspicture}(-3,-2.5)(3.5,2.5)
\psset{xunit=.6cm,yunit=.6cm}
\rput{-20}{%
\psplot[linewidth=1pt]{-3.7}{-2}{x 3 add 3 mul}
\uput[180]{*0}(-2.3,2.2){$\ell$}
\psline[linewidth=1pt,linecolor=foldline](0,-2)(0,3)
\uput[0]{*0}(0,2.5){$\delta$}
\psplot[linewidth=1pt,linecolor=foldline]{2}{3.7}{x -3 add -3 mul}
\uput[0]{*0}(2,2.7){$\gamma$}
\psplot[linewidth=1pt]{-1}{2.4}{x 1 add -1.5 mul 3 add}
\uput[180]{*0}(-.5,2.2){$s$}
\qdisk(-1.67,-1){2pt}
\uput[90]{*0}(-1.67,-1){$Q$}
\qdisk(1.67,-1){2pt}
\psplot[linewidth=1pt]{3.29}{3.8}{x -3.5 add 10 mul}
\uput[0]{*0}(3.8,2.7){$r$}
\qdisk(3.7,2){2pt}
\qdisk(1.24,1.18){2pt}
\uput[90]{*0}(1.24,1.18){$P$}
\psarcn[linecolor=halfgray]{->}(3.3,-1){1.8}{135}{84}
\psarc[linecolor=halfgray]{->}(0,3){2.6}{248}{292}
\psarc[linecolor=halfgray]{->}(0,4){2.6}{230}{310}
}
\end{pspicture}
\caption{A two-fold operation \cite{Lucero2018}. Given two points $P$ and $Q$ and three lines $\ell$, $r$, $s$, simultaneously fold along a line $\gamma$ to place $P$ onto $r$, and along a line $\delta$ to place $Q$ onto $s$ and to align $\ell$ and $\gamma$.} 
\label{nishi}
\end{figure}
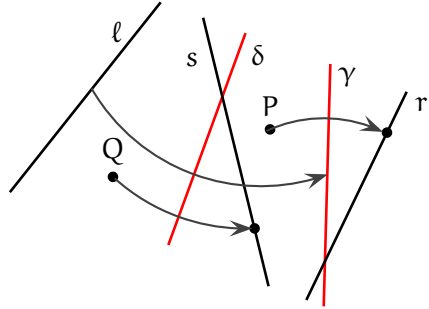

Any number of $n_i$-fold operations, $i=1, 2, \ldots, k$, may be gather together and considered as a unique $n$-fold operation, with $n=\sum_{i=1}^k n_i$. 
Thus,
we define $n$-fold origami as the construction tool consisting of all the $k$-fold elementary operations, with $1\le k\le n$.

The medium on which all folds are performed is assumed to be an infinite Euclidean plane. Points are referred by their Cartesian $xy$-coordinates or by identifying them as complex numbers, as convenient. A point or complex number is said to be \textit{$n$-fold constructible} iff it can be constructed starting from numbers 0 and 1 and applying a sequence of $n$-fold operations.
It has been shown that the set of constructible numbers in $\mathbb{C}$ by single-fold origami is the smallest subfield of $\mathbb{C}$ that is closed under square roots, cube roots and complex
conjugation \citep{Alperin2000}. An immediate corollary is that the field $\mathbb{Q}$ of rational numbers is $n$-fold  constructible, for any $n\ge 1$.

The present analysis is based on the following version of a theorem on polynomial root construction \citep{Alperin2006}.

\begin{thm}
The real roots of any $m$th-degree polynomial with $n$-fold constructible coefficients are $n$-fold constructible if $m\le n+2$.
\label{thm_a} 
\end{thm}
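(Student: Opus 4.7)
The plan is to prove the theorem via Lill's geometric method, which encodes the real roots of a real polynomial as a right-angle reflecting trajectory along a polygonal path, and then to realize that trajectory by simultaneous folds.

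Given $p(x) = a_m x^m + a_{m-1} x^{m-1} + \cdots + a_0$ with $n$-fold constructible coefficients, I would first construct the associated Lill path: a sequence of $m$ segments $S_1, S_2, \ldots, S_m$ meeting at right angles, where the signed length of $S_i$ equals the coefficient $a_{m-i+1}$. All vertices of this path are $n$-fold constructible from the coefficients using the single-fold operations of Table~\ref{axioms}. Lill's theorem then states that a real number $r$ is a root of $p$ if and only if the polygonal trajectory leaving the initial vertex of the path at angle $\arctan(r)$ to $S_1$, reflecting at right angles off each intermediate segment, terminates at the final vertex of the path.

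Next, I would realize this reflecting trajectory by a single simultaneous multi-fold operation. Each reflection point lies on one of the $m-2$ intermediate segments $S_2, \ldots, S_{m-1}$, and the fold line through that point must carry the incoming trajectory direction onto the outgoing one, equivalently placing an auxiliary point (the reflected image of the previous vertex) onto the corresponding Lill segment. Since the initial and final legs are anchored at the two known endpoints of the Lill path, the entire trajectory is specified by these $m-2$ fold lines together with the alignment conditions between consecutive legs. The combined constraints form an $(m-2)$-fold elementary operation whose finitely many solutions are in bijection with the real roots of $p$. Because $m \le n+2$ implies $m-2 \le n$, this is a valid $n$-fold operation in the sense defined above. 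From the resulting fold lines one reads off the slope of the first leg, and an auxiliary single-fold operation recovers $r$ itself, establishing that it is $n$-fold constructible.

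The main obstacle will be verifying that the combined incidence system is a \emph{minimal} set of constraints with only finitely many solutions, so that it qualifies as an elementary $(m-2)$-fold operation in the sense of \citep{Alperin2006} rather than a higher-dimensional family of fold configurations. Degenerate situations (vanishing coefficients that collapse a Lill segment, or the root $r=0$) also require care and can be handled by factoring out the trivial root or by applying a constructible translation $x \mapsto x+c$ before invoking Lill's construction, thereby reducing the degree or eliminating the degeneracy while staying within the $n$-fold constructible world.
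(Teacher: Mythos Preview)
Your proposal follows essentially the same route as the paper: both invoke Lill's method and observe that the reflecting trajectory solving a degree-$m$ equation can be realized by an $(m-2)$-fold operation, which is admissible since $m-2\le n$. One minor bookkeeping slip: the Lill path for a degree-$m$ polynomial has $m+1$ segments (one per coefficient $a_m,\ldots,a_0$), not $m$, so the intermediate lines on which the reflecting trajectory turns number $m-1$ rather than $m-2$; this does not affect the final fold count, and the paper itself simply defers all such details (including the degeneracy issues you rightly flag) to the cited references \citep{Lill1867,Hull2011,Riaz1962}.
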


\begin{proof}
The real roots of any $m$th-degree polynomial may be obtained by \citeauthor{Lill1867}'s \citeyearpar{Lill1867} method  \citep[see also][]{Hull2011,Riaz1962}. It consists of defining first a right-angle path from and origin $O$ to a terminus $T$, where the lengths and directions of the path's segments are given by the non-zero coefficients of the polynomial. Next, a second right-angle
path with $m$ segments between $O$ and $T$ is constructed by folding, and this construction demands the execution of $m-2$ simultaneous folds, if $m\ge 3$, or a single fold, if $m\le 3$. The first intersection (from $O$) between both paths is the sought solution. 

Details of the method may be found in the cited references.  An example for solving $x^5-a=0$ is shown in Fig.~\ref{mroot}.
\end{proof}

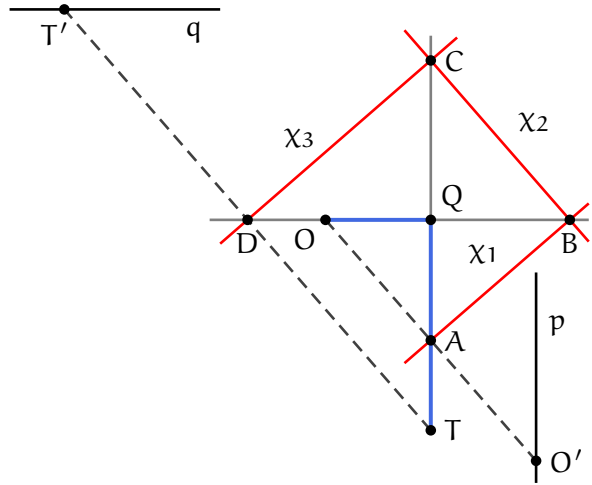
\begin{figure}
\centering
\psset{xunit=.7cm,yunit=.7cm}
\begin{pspicture}(-7,-5)(5.5,4)
\psline[linewidth=1pt](4,-1)(4,-5)
\psline[linewidth=1pt](-6,4)(-2,4)
\psline[linecolor=gray,linewidth=1pt](2,0)(2,3.5)
\psline[linecolor=gray,linewidth=1pt](-2.2,0)(5,0)

\psline[linecolor=vector,linewidth=1.5pt](0,0)(2,0)
\psline[linecolor=vector,linewidth=1.5pt](2,0)(2,-4)

\psline[linecolor=halfgray,linewidth=1pt, linestyle=dashed](0,0)(4,-4.58)
\psline[linecolor=halfgray,linewidth=1pt, linestyle=dashed](2,-4)(-4.96,4)

\psplot[linewidth=1pt,linecolor=foldline,plotpoints=10]{1.5}{5}{x -2 add 2.29 mul 2.64 div -2.29 add}
\psplot[linewidth=1pt,linecolor=foldline,plotpoints=10]{-2}{2.5}{x 1.48 add 3.03 mul 3.48 div}
\psplot[linewidth=1pt,linecolor=foldline,plotpoints=10]{1.5}{5}{x -4.64 add 3.03 mul -2.64 div}

\qdisk(0,0){2pt}
\uput[-135](0,0){$O$}
\qdisk(2,0){2pt}
\uput[45](2,0){$Q$}
\qdisk(2,-4){2pt}
\uput[0](2,-4){$T$}

\qdisk(2,-2.29){2pt}
\uput[0](2,-2.29){$A$}
\qdisk(4.64,0){2pt}
\uput[-90](4.64,0){$B$}
\qdisk(2,3.03){2pt}
\uput[0](2,3.03){$C$}
\qdisk(-1.48,0){2pt}
\uput[-90](-1.48,0){$D$}

\qdisk(4,-4.58){2pt}
\uput[0](4,-4.58){$O'$}
\qdisk(-4.96,4){2pt}
\uput[-110](-4.96,4){$T'$}

\uput[0](4,-2){$p$}
\uput[-90](-2.5,4){$q$}

\uput[135](3.5,-1){$\chi_1$}
\uput[45](3.5,1.5){$\chi_2$}
\uput[135](0,1.2){$\chi_3$}

\end{pspicture}
\caption{Geometrical solution of $x^5-a=0$ by 3-fold origami. Set perpendicular segments $\overline{OQ}$ and $\overline{QT}$ with respective lengths 1 and $a$, line $p$ parallel to $\overline{QT}$ at a distance of 1, and line $q$ parallel to $\overline{OQ}$ at a distance of $a$. Next, construct Lill's path $\overline{OA}$, $\overline{AB}$, $\overline{BC}$, $\overline{CD}$, $\overline{DT}$ by performing three simultaneous folds: fold $\chi_1$ places  point $O$ onto line $p$, fold $\chi_2$ is perpendicular to $\chi_1$ and passes through the intersection of $\chi_1$ with the direction line of  $\overline{OQ}$ (point $B$), and fold $\chi_3$ is perpendicular to $\chi_2$, passes through the intersection of $\chi_2$ with the direction line of  $\overline{QT}$ (point $C$), and places  point $T$ onto line $q$. Point $A$ is at the intersection of $\chi_1$ with the direction line of $QT$, and the length of $\overline{QA}$ is $\sqrt[5]{a}$.}  
\label{mroot}
\end{figure}

It must be noted that the roots of 5th- and 7th-degree polynomials may be obtained by 2-fold origami, instead of the 3- and 5-fold origami, respectively, predicted by the above theorem \citep{Nishimura2015,Konig2016}. Therefore, Theorem \ref{thm_a} only posses a sufficient condition on the number of simultaneous folds required.  

\section{Angle section}

Let us consider first the case of division into any prime number of parts.

\begin{lem}
Any angle may be divided into $p$ equal parts by $n$-fold origami if $p$ is a prime and $p\le n+2$.
\label{thm_angle}
\end{lem}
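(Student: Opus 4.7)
The plan is to reduce the $p$-section of a given angle $\theta$ to the construction of a real root of a suitably chosen polynomial of degree $p$ with $n$-fold constructible coefficients, so that Theorem~\ref{thm_a} may be invoked directly.

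First, from the two rays that define $\theta$, I would construct the real number $c=\cos\theta$ by laying off a unit segment along one ray (rationals being $n$-fold constructible) and folding a perpendicular to the other ray through its far endpoint, using operations~4 and~5 of Table~\ref{axioms}. The distance from the vertex to the foot of this perpendicular equals $c$, so $c$ is $n$-fold constructible. The value $x=\cos(\theta/p)$ then satisfies
\[
T_p(x)-c=0,
\]
where $T_p$ denotes the Chebyshev polynomial of the first kind of degree $p$, whose coefficients are integers. All coefficients of this polynomial are therefore $n$-fold constructible, its degree is $p\le n+2$, and so Theorem~\ref{thm_a} yields $\cos(\theta/p)$ as an $n$-fold constructible real number. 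The angle $\theta/p$ itself is then recovered in standard fashion: build the point on the unit circle with abscissa $\cos(\theta/p)$ and ordinate $\sqrt{1-\cos^2(\theta/p)}$ (the square root being available through single-fold operations) and join it to the vertex of $\theta$.

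The main obstacle I anticipate lies in the uniqueness issue: the polynomial $T_p(x)-c$ possesses in general several real roots, corresponding to the cosines $\cos(\theta/p+2\pi k/p)$ for $k=0,1,\ldots,p-1$, and one must argue that the specific root equal to $\cos(\theta/p)$ is indeed produced. This is however only a selection matter: Theorem~\ref{thm_a} delivers every real root simultaneously as an $n$-fold constructible number, and the correct one can be identified a posteriori by inspection, for example as the largest root when $0<\theta<\pi$. The case $p=2$ reduces to the classical angle bisection by a single fold, so the genuine content of the lemma concerns $p\ge 3$.
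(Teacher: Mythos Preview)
Your proposal is correct and follows essentially the same route as the paper: construct $\cos\theta$ from the given angle, apply the Chebyshev identity $T_p(\cos(\theta/p))=\cos\theta$ to obtain a degree-$p$ polynomial with constructible coefficients, invoke Theorem~\ref{thm_a}, and then rebuild the angle $\theta/p$ from its cosine. Your added discussion of root selection is a welcome clarification that the paper omits, but it does not constitute a different method.
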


\begin{proof}
Let $\ell$ be a line forming an angle $\theta$ with the $x$-axis on the plane. Then, point $P(\cos \theta, 0)$ may be constructed as shown in Fig.~\ref{angle}. 

Consider next the multiple angle identity
\begin{equation}
\cos(p\alpha)=T_p(\cos \alpha)
\label{cheb}
\end{equation}
where $T_p$ is the $p$th Chebyshev polynomial of the first kind, defined by
\begin{align}
T_0(x) & =1,\\
T_1(x) & = x,\\
T_{p+1}(x) & = 2xT_p(x)-T_{p-1}(x).
\end{align}

\begin{figure}
\centering
\begin{pspicture}(-1,-1.)(5,5)
\psset{xunit=1cm,yunit=1cm}
\psaxes[linecolor=gray,labels=none,ticks=none,linewidth=1pt]{->}(0,0)(-.5,-.5)(5,5)[$x$,0][$y$,0]
\psarc[linecolor=gray,linewidth=1pt](0,0){4}{-10}{100}
\psarc[linecolor=gray,linewidth=1pt](0,0){1}{0}{45}
\uput[5]{*0}(.9,0.2){$\theta$}
\psline[linewidth=1pt,plotpoints=10](0,0)(4, 4)
\psline[linecolor=foldline,linewidth=1pt,plotpoints=10](0,0)(5.22,2.16)
\psline[linecolor=foldline,linewidth=1pt,plotpoints=10](2.83,-0.5)(2.83,3.5)
\psline[linewidth=1pt,linestyle=dashed,plotpoints=10](4,0)(2.83, 2.83)
\uput[90]{*0}(2.8,3.5){$\chi_2$}
\uput[135]{*0}(3.8,3.8){$\ell$}
\uput[110]{*0}(5,2){$\chi_1$}
\uput[-45]{*0}(2.83,0){$P$}
\uput[-45]{*0}(4,0){$Q$}
\uput[0]{*0}(2.83, 2.83){$Q'$}
\uput[-45]{*0}(0,0){$O$}
\qdisk(2.83, 2.83){2pt}
\qdisk(2.83,0){2pt}
\qdisk(4,0){2pt}
\qdisk(0,0){2pt}
\end{pspicture}
\caption{Construction for Lemma \ref{thm_angle}. Given points $O(0, 0)$, $Q(1, 0)$, and line $\ell$ forming an angle $\theta$ with  $\overline{OQ}$: (1) fold along a line ($\chi_1$) to place $\ell$ onto  $\overline{OQ}$, and next (2) fold along a  perpendicular ($\chi_2$) to $\overline{OQ}$ passing through $Q'$. The intersection of $\overline{OQ}$ and $\chi_2$ is $P=(\cos \theta$, 0).} 
\label{angle}
\end{figure}
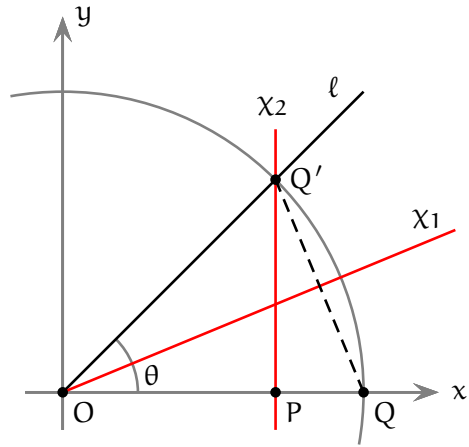

Letting $\theta=p\alpha$, then Eq.~(\ref{cheb}) is a $p$th-degree polynomial equation on $x=\cos(\theta/p)$ with integer (constructible) coefficients. According to Theorem \ref{thm_a}, the equation may be solved by $(p-2)$-fold origami, if $p\ge 3$, or single-fold origami, if $p\le 3$. Then, a line $\ell'$ forming an angle $\theta/p$ may be constructed from $\cos(\theta/p)$ by reversing the procedure in Fig.~\ref{angle}. 
\end{proof}

The lemma is easily extended to the general case of division into an arbitrary number of parts. 

\begin{thm}
Any angle may be divided into $m\ge 2$ equal parts by $n$-fold origami if 
the largest prime factor $p$ of $m$ satisfies $p\le n+2$.
\label{thm_angle2}
\end{thm}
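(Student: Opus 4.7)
The plan is to reduce the general statement to the prime case already handled by Lemma~\ref{thm_angle} via an iterated-construction argument based on the prime factorization of $m$.

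First I would write $m=p_1p_2\cdots p_k$ as a product of primes, counted with multiplicity, and note that because $p$ is the largest prime factor of $m$ and $p\le n+2$, every factor satisfies $p_i\le n+2$. Given the input angle $\theta$, I apply Lemma~\ref{thm_angle} with the prime $p_1$ to produce a line through the origin making an angle $\theta/p_1$ with the $x$-axis. Since $p_1\le n+2$, this step is executable by $n$-fold origami.

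Next I would iterate: treating the newly constructed angle $\theta/p_1$ as the input, I apply Lemma~\ref{thm_angle} again with the prime $p_2$ (permissible because $p_2\le n+2$) to obtain an angle of $\theta/(p_1p_2)$, and so on. After $k$ applications I arrive at a line making angle $\theta/(p_1p_2\cdots p_k)=\theta/m$ with the $x$-axis. Reflecting this line successively by the standard single-fold constructions (operations~1 and~5 of Table~\ref{axioms}), I can lay off $m$ consecutive copies of this angle from the $x$-axis, thereby realizing the desired partition of $\theta$ into $m$ equal parts.

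I do not expect a genuine obstacle here; the only thing worth being careful about is that each intermediate construction uses only $n$-fold constructible data, which is automatic since the output of every step is $n$-fold constructible and serves as the input to the next invocation of Lemma~\ref{thm_angle}. The argument is essentially a bookkeeping one on top of the prime case, so the proof should be very short.
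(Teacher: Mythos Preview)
Your proposal is correct and follows exactly the same route as the paper: factor $m=p_1\cdots p_k$ into primes with each $p_i\le n+2$, then invoke Lemma~\ref{thm_angle} repeatedly (the paper phrases this as induction over $k$). The extra remarks about laying off $m$ copies of $\theta/m$ and about the constructibility of intermediate data are harmless elaborations of the same idea.
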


\begin{proof}
Let $m=p_1p_2\cdots p_k$, where each $p_i$ is a prime and $p_i\le n+2$. Then, the theorem is proved by induction over $k$ and applying Lemma \ref{thm_angle}.
\end{proof}

Again, we remark that the above theorem only posses a sufficient condition on the number of multiple folds required. For 
$m=5$, it predicts $n=3$; however, a solution using only 2-fold origami has been published \citep{Lang2004a}.

\begin{exmp}
Any angle may be divided into 11 equal parts by 9-fold origami.
\end{exmp}

\section{Regular polygons}

The analysis follows similar steps to previous treatments on geometric constructions by single-fold origami and other tools \citep{Gleason1988,Stewart2015,Videla1997}.

Consider an $m$-gon ($m\ge 3)$ circumscribed in a circle with radius 1 and centered at the origin in the complex plane. Its vertices are given by the $m$th-roots of unity, which are the solutions of
$z^m-1=0$. 

Let us recall that an $m$th root of unity is primitive if it is not a $k$th root of unity for $k<m$. The primitive $m$th roots are solutions of the $m$th cyclotomic polynomial
\begin{equation}
\Phi_m(z)=\prod_{\stackrel{1\le k \le m}{\gcd(k, m) =1}} \left( z-e^{\nicefrac{2i\pi k}{m}}\right).
\end{equation}
This polynomial has degree $\phi(m)$, where $\phi$ is Euler's totient function; i.e., $\phi(m)$ is  the number of positive integers $k\le m$ that are coprime to $m$.
A property of any $m$th primitive root $\xi_m$ is that all the $m$ distinct roots may be obtained as $\xi_m^k$, for $k=0, 1, \ldots, m-1$. This property provides a convenient way to construct the regular $m$-gon. 
 
\begin{lem}
The regular $m$-gon is $n$-fold constructible if a primitive $m$th root of unity is $n$-fold constructible.
\label{lemmaroot}
\end{lem}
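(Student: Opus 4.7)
The plan is to use the algebraic fact, recalled just before the lemma, that all $m$-th roots of unity can be written as powers $\xi_m^k$ of a single primitive root $\xi_m$, for $k=0,1,\ldots,m-1$. These powers are precisely the vertices of the regular $m$-gon inscribed in the unit circle, so once I have them as constructed points, I can finish by connecting consecutive vertices with straight folds (operation~4 of Table~\ref{axioms}).

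First I would recall that the set of $n$-fold constructible complex numbers forms a field: it already contains $\mathbb{Q}$ (as noted after Table~\ref{axioms}), and since every single-fold operation is available as an $n$-fold operation with $1\le k\le n$, the standard single-fold arithmetic constructions (addition, subtraction, multiplication, division) carry over. In particular, the set is closed under multiplication of complex numbers.

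Next, assuming $\xi_m$ is $n$-fold constructible, I would form the powers $\xi_m,\xi_m^2,\ldots,\xi_m^{m-1}$ by iterated multiplication, each step producing another $n$-fold constructible complex number. Together with $\xi_m^0=1$, this yields the $m$ vertices of the regular $m$-gon as constructed points in the plane. Finally, applying operation~4 to each pair of consecutive vertices realises the sides of the polygon.

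The only delicate point is the appeal to closure of the $n$-fold constructible numbers under the field operations, which I would justify simply by invoking \citet{Alperin2000}'s single-fold result and the inclusion of single-fold operations among the $n$-fold ones. Everything else is purely algebraic bookkeeping, so I do not anticipate a genuine obstacle.
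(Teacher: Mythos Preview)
Your argument is correct. The only real difference from the paper is the mechanism by which you pass from $\xi_m$ to its powers. You invoke the field structure of the $n$-fold constructible numbers (inherited from the single-fold case via \citet{Alperin2000}) and obtain each $\xi_m^{k}$ by iterated complex multiplication. The paper instead gives an explicit one-step geometric construction: writing $\xi_m=e^{i\theta}$, a single fold along the line through the origin and the current vertex reflects the previous vertex to the next one, i.e.\ realises the rotation by~$\theta$ directly (see Fig.~\ref{rotation}). Both approaches then finish identically, drawing the sides with operation~4. Your route is a little more abstract and leans on a cited structural result; the paper's is self-contained and shows the actual fold, which fits the constructive spirit of the article. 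Either way the lemma follows without difficulty.
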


\begin{proof}
 Let $\xi_m=e^{i\theta}$ be a primitive $m$th root of unity. Then, $\xi_m^k=e^{ik\theta}$ and therefore all roots may be constructed from $\xi_m$ by applying rotations of an angle $\theta$ around the origin. The rotations may be performed by single-fold origami, as shown in Fig. \ref{rotation}. Once all the roots have been constructed, segments connecting consecutive roots may be created by single folds.  
\end{proof}

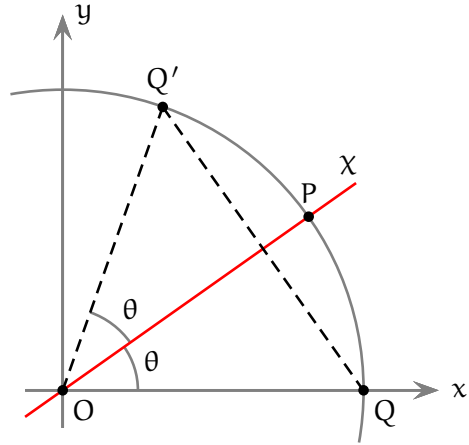
\begin{figure}
\centering
\begin{pspicture}(-1,-1.)(5,5)
\psset{xunit=1cm,yunit=1cm}
\psaxes[linecolor=gray,labels=none,ticks=none,linewidth=1pt]{->}(0,0)(-.5,-.5)(5,5)[$x$,0][$y$,0]
\psarc[linecolor=gray,linewidth=1pt](0,0){4}{-10}{100}
\psarc[linecolor=gray,linewidth=1pt](0,0){1}{0}{35.3}
\psarc[linecolor=gray,linewidth=1pt](0,0){1.1}{35.3}{70.6}
\uput[17]{*0}(.9,0.3){$\theta$}
\uput[52]{*0}(0.7,.8){$\theta$}
\psplot[linecolor=foldline,linewidth=1pt,plotpoints=10]{-.5}{3.9}{x .707 mul}
\psplot[linewidth=1pt,linestyle=dashed,plotpoints=10]{1.33}{4}{x -1.414 mul 5.65 add}
\psline[linewidth=1pt,linestyle=dashed,plotpoints=10](0,0)(1.33,3.77)
\uput[90]{*0}(3.8,2.7){$\chi$}
\uput[90]{*0}(1.33,3.77){$Q'$}
\uput[-45]{*0}(4,0){$Q$}
\uput[90]{*0}(3.27,2.31){$P$}
\uput[-45]{*0}(0,0){$O$}
\qdisk(3.27,2.31){2pt}
\qdisk(1.33,3.77){2pt}
\qdisk(4,0){2pt}
\qdisk(0,0){2pt}
\end{pspicture}
\caption{Given $O = (0, 0)$, $Q = (1, 0)$ and $P=(\cos \theta, \sin \theta)$, a fold along line $\chi$ passing through $O$ and $Q$ places $Q$ on $Q'=(\cos 2\theta, \sin 2\theta)$.} 
\label{rotation}
\end{figure}

Next,  
we state
a sufficient condition for the $n$-fold constructability of a number $\alpha \in \mathbb{C}$.
\begin{lem}
A number $\alpha \in \mathbb{C}$ is $n$-fold constructible if there is a field tower  $\mathbb{Q}=F_0\subseteq F1 \subseteq \cdots \subseteq F_{k-1} \subseteq F_k \subset \mathbb{C}$, such that $\alpha\in F_k$ and $[F_j : F_{j-1}]\in\{2, 3, \ldots, n+2\}$for each $j=1, 2, \ldots, k$.
\label{alpha}
\end{lem}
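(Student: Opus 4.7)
The plan is to induct on the tower length $k$. For the base case $k=0$, we have $\alpha\in\mathbb{Q}=F_0$, and this is already known to be $n$-fold constructible by the corollary following Alperin's characterization of single-fold origami numbers. So nothing to prove there.

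For the inductive step, assume every element of $F_{j-1}$ is $n$-fold constructible and fix $\alpha\in F_j$. Because we are in characteristic zero, the primitive element theorem yields $F_j=F_{j-1}(\gamma)$ for some $\gamma$ whose minimal polynomial $p(x)\in F_{j-1}[x]$ has degree $d_j=[F_j:F_{j-1}]\in\{2,\ldots,n+2\}$. Writing $\alpha$ as an $F_{j-1}$-polynomial in $\gamma$, it suffices to construct $\gamma$, because the $n$-fold constructible field $\mathcal{K}_n$ is closed under field operations. The coefficients of $p$ lie in $F_{j-1}\subseteq\mathcal{K}_n$ by hypothesis, so Theorem~\ref{thm_a} applies and delivers every \emph{real} root of $p$; in particular, if $\gamma\in\mathbb{R}$ we are done.

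The delicate case is when $\gamma$ is non-real. Here I would build $\gamma=a+bi$ by constructing $a=\operatorname{Re}\gamma$ and $b=\operatorname{Im}\gamma$ separately and combining them with $i$, which is single-fold constructible (reflect the $x$-axis to obtain the $y$-axis and mark the point $(0,1)$) and hence lies in $\mathcal{K}_n$. Because $\mathcal{K}_n$ is closed under complex conjugation (inherited from the single-fold case), every coefficient $c_k=u_k+iv_k$ of $p$ has $u_k,v_k\in\mathcal{K}_n\cap\mathbb{R}$, so substituting $x=a+bi$ and separating real and imaginary parts yields two real polynomial equations $U(a,b)=V(a,b)=0$ whose coefficients are $n$-fold constructible reals. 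A suitable elimination then reduces this system to a one-variable real polynomial to which Theorem~\ref{thm_a} can be applied.

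\textbf{Main obstacle.} The difficulty is exactly in the non-real case. Theorem~\ref{thm_a} is asymmetric in that it provides only real roots, and a naive resultant elimination of $b$ from $U=V=0$ bumps the degree of the eliminant in $a$ up to $d_j^2$, which can exceed the threshold $n+2$. The plan is to avoid this blow-up either by observing that $\bar\gamma$ is a root of the conjugate polynomial $\bar p\in\overline{F_{j-1}}[x]\subseteq\mathcal{K}_n[x]$ of the same degree $d_j$, so that $\gamma+\bar\gamma$ and $\gamma\bar\gamma$ sit inside an extension admitting its own tower of steps bounded by $n+2$, or by appealing to the stronger complex-root version of the Lill–origami construction worked out in \citep{Alperin2006}. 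Either route recovers $a,b\in\mathcal{K}_n$ and hence $\gamma=a+bi\in\mathcal{K}_n$, completing the induction.
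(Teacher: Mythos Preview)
Your induction framework, base case, and real-root case all match the paper's. The genuine gap is exactly where you flag it: the non-real case is not actually proved. Your two suggested escape routes are both left as hand-waving---the first (building a further tower for $\gamma+\bar\gamma$ and $\gamma\bar\gamma$) is not carried out and would need its own degree bookkeeping, and the second (appealing to a complex-root Lill construction from \cite{Alperin2006}) imports a result the paper never states or uses.

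The paper avoids the resultant blow-up by a much simpler observation, and it also skips your primitive-element detour by working with $\alpha$ directly. Since $\alpha\in F_k$, its minimal polynomial over $F_{k-1}$ has degree dividing $[F_k:F_{k-1}]\le n+2$. When $\alpha\notin\mathbb{R}$, the paper notes that $\bar\alpha$ is also a root of that same polynomial, so $\Re(\alpha)=(\alpha+\bar\alpha)/2$ and $\Im(\alpha)=(\alpha-\bar\alpha)/(2i)$ both lie in $F_k$. But then each of $\Re(\alpha)$ and $\Im(\alpha)$ is a \emph{real} element of $F_k$, hence satisfies its own minimal polynomial over $F_{k-1}$ of degree dividing $[F_k:F_{k-1}]\le n+2$, and Theorem~\ref{thm_a} constructs each one directly---no elimination, no $d_j^2$ blow-up. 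That is the idea you are missing. (One may note that the paper's step ``$\bar\alpha$ is also a root of $p$'' tacitly assumes the coefficients of $p$ are real, i.e.\ that the tower is closed under complex conjugation; this holds in the cyclotomic application of Theorem~\ref{thm_fin} but is not literally among the lemma's hypotheses.)
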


\begin{proof}
The theorem is proved by induction over $k$. If $k=0$, then $\alpha\in F_0=\mathbb{Q}$ is constructible by single-fold origami \citep{Alperin2000}, and therefore is $n$-fold constructible for any $n\ge1$. 

Next, assume that $F_{k-1}$ is $n$-fold constructible. 
Let $\alpha\in F_k$, then $\alpha$ is a root of a minimal polynomial $p$ with coefficients in $F_{k-1}$, and its degree divides $[F_k : F_{k-1}]$. 
If $\alpha$ is real, then it may be constructed by $n$-fold origami (Theorem \ref{thm_a}). If not, then its complex conjugate $\bar{\alpha}$ is also a root of $p$. The real and imaginary parts of $\alpha$, $\Re(\alpha)=(\alpha +\bar{\alpha})/2$ and $\Im(\alpha)=(\alpha -\bar{\alpha})/2$, respectively, are in $F_k$ and therefore they are real roots of minimal polynomials $p_\Re$ and $p_\Im$ with coefficients in $F_{k-1}$. Again, the degrees of both $p_\Re$ and $p_\Im$ divide $[F_k : F_{k-1}]$ and hence $\Re(\alpha)$ and $\Im(\alpha)$ are $n$-fold origami constructible.
\end{proof}

Using the above lemmas, we finally  obtain a sufficient condition for the constructability of the regular $m$-gon.

\begin{thm}
The regular $m$-gon is $n$-fold constructible if the largest prime factor $p$ of $\phi(m)$ satisfies $p\le n+2$.
\label{thm_fin}
\end{thm}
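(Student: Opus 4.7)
The plan is to chain together the two preceding lemmas and classical Galois theory applied to the cyclotomic extension. By Lemma \ref{lemmaroot} it is enough to show that a primitive $m$th root of unity $\xi_m$ is $n$-fold constructible, and by Lemma \ref{alpha} it is in turn enough to exhibit a tower
\[
\mathbb{Q}=F_0\subseteq F_1\subseteq\cdots\subseteq F_k
\]
with $\xi_m\in F_k$ and each degree $[F_j:F_{j-1}]$ lying in $\{2,3,\ldots,n+2\}$.

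First I would take $K=\mathbb{Q}(\xi_m)$, so that $K/\mathbb{Q}$ is Galois with $\mathrm{Gal}(K/\mathbb{Q})\cong(\mathbb{Z}/m\mathbb{Z})^{*}$ of order $\phi(m)$. The key structural fact is that this group is \emph{abelian}, so every subgroup is normal and, moreover, by the structure theorem it admits a composition series whose successive quotients are cyclic of prime order. Concretely, if $\phi(m)=q_1q_2\cdots q_k$ is the factorization of $\phi(m)$ into (not necessarily distinct) primes, then there is a chain
\[
\{1\}=H_k\subset H_{k-1}\subset\cdots\subset H_1\subset H_0=\mathrm{Gal}(K/\mathbb{Q})
\]
with $[H_{j-1}:H_j]=q_j$.

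Next I would invoke the Galois correspondence to define $F_j$ as the fixed field of $H_j$. This yields $\mathbb{Q}=F_0\subseteq F_1\subseteq\cdots\subseteq F_k=K$ with $[F_j:F_{j-1}]=q_j$. By hypothesis every prime factor $q_j$ of $\phi(m)$ satisfies $q_j\le n+2$, and of course $q_j\ge 2$, so each degree lies in $\{2,3,\ldots,n+2\}$. Lemma \ref{alpha} then delivers that $\xi_m\in F_k$ is $n$-fold constructible, and Lemma \ref{lemmaroot} finishes the argument.

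The main obstacle, and the only step that really uses something beyond the lemmas already proved, is the passage from the order of the Galois group to a tower with prime-degree steps; that step relies on the fact that $(\mathbb{Z}/m\mathbb{Z})^{*}$ is abelian, which allows an unrestricted choice of a composition series. Everything else is a direct application of the machinery established in Theorem \ref{thm_a} and Lemmas \ref{lemmaroot} and \ref{alpha}.
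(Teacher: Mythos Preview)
Your proof is correct and follows essentially the same route as the paper: both use that $\mathbb{Q}(\xi_m)/\mathbb{Q}$ is Galois with abelian group of order $\phi(m)$, refine this group by a composition series with prime-order quotients, pass via the Galois correspondence to a tower of fields with prime-degree steps bounded by $n+2$, and then invoke Lemmas~\ref{alpha} and~\ref{lemmaroot}. Your write-up is in fact slightly more explicit (identifying the Galois group as $(\mathbb{Z}/m\mathbb{Z})^{*}$ and spelling out the indexing of the subgroup chain), but the argument is the same.
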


\begin{proof}

Let $\phi(m)= p_1p_2\cdots p_k$, where each $p_i$ is a prime and $p_i\le n+2$, and $\xi_m$ be a primitive $m$th root of unity. The Galois group $\Gamma$ of the extension $\mathbb{Q}(\xi_m):\mathbb{Q}$ is abelian and has order $\phi(m)$
\citep{Stewart2015}. Therefore, it has a series of normal subgroups $1=\Gamma_0\subseteq \Gamma_1 \subseteq \cdots \subseteq \Gamma_r = \Gamma$ 
where each factor $\Gamma_{j+1}/|\Gamma_j$ is abelian and has order $p_i$ for some $1\le i \le k$. By the Galois correspondence, there is a field tower $\mathbb{Q}(\xi_m)=K_0 \supseteq K_1 \supseteq \cdots \supseteq K_r = \mathbb{Q}$ such that $[K_j: K_{j+1}]=p_i$. 
Thus, by Lemma \ref{alpha}, $\xi_m$ is $n$-fold constructible, and by Lemma \ref{lemmaroot}, the $m$-gon is $n$-fold constructible. 
\end{proof}

\begin{exmp}
The totient of 199 is $\phi(199)=2\cdot 3^2\cdot11$. Therefore, the regular 199-gon may be constructed by 9-fold origami.   
\end{exmp}

\section{Final comments}

\citet{Gleason1988} noted that any regular $m$-gon may be constructed if, in addition to straight edge and compass, a tool to $p$-sect any angle is available for every prime factor $p$ of $\phi(m)$. The above results match his conclusion: if  $n$-fold origami can $p$-sect any angle for every prime factor $p$ of $\phi(m)$, then, by Lemma \ref{thm_angle}, the largest prime factor is $p_\text{max}\le n+2$. By Theorem \ref{thm_fin}, the $m$-gon can be construted.

\end{document}